\definecolor{codegreen}{rgb}{0,0.6,0}
\definecolor{codegray}{rgb}{0.5,0.5,0.5}
\definecolor{codepurple}{rgb}{0.58,0,0.82}
\definecolor{backcolour}{rgb}{0.95,0.95,0.92}
\lstdefinestyle{mystyle}{
  backgroundcolor=\color{backcolour},   commentstyle=\color{codegreen},
  keywordstyle=\color{magenta},
  numberstyle=\tiny\color{codegray},
  stringstyle=\color{codepurple},
  basicstyle=\ttfamily\footnotesize,
  breakatwhitespace=false,         
  breaklines=true,                 
  captionpos=b,                    
  keepspaces=true,                 
  numbers=left,                    
  numbersep=5pt,                  
  showspaces=false,                
  showstringspaces=false,
  showtabs=false,                  
  tabsize=2
}
\newtheorem{theorem}{Theorem}[section]
\newtheorem{corollary}[theorem]{Corollary}
\newtheorem*{conjecture*}{Conjecture}
\theoremstyle{definition}
\theoremstyle{remark}
\newtheorem*{remark*}{remark}
\author{Runbo Li}
\address{International Curriculum Center, The High School Affiliated to Renmin University of China, Beijing, China}
\email{runbo.li.carey@gmail.com}
\title[]{A note on variants of Buchstab's identity}
\subjclass[2020]{11N35} 
\keywords{Buchstab's identity, sieve methods}
\begin{document}
	
\begin{abstract}
The author proves variants of Buchstab's identity on sieve functions, refining the previous work on new iteration rules of Brady. The main tool used in the proof is a special form of combinatorial identities related to the binomial coefficients. As a by--product, the author obtains better inequalities of $F_{\kappa}(s)$ and $f_{\kappa}(s)$ for dimensions $\kappa > 1$.
\end{abstract}

\maketitle

\tableofcontents

\section{Introduction}
Let $\mathcal{A}$ be a set of numbers, $\mathcal{A}_d = \{a: a d \in \mathcal{A}\}$ and $S(\mathcal{A}, z)=\sum_{\substack{a \in \mathcal{A} \\ (a, \prod_{p<z} p)=1}} 1$. Suppose that $\kappa, z, y$ are such that for every squarefree integer $d$, all of whose prime factors are less than $z$, we have
\begin{equation}
\left| \left| \mathcal{A}_d \right| - \kappa^{\omega(d)} \frac{y}{d} \right| \leqslant 1.
\end{equation}
Suppose that $y = z^s$ and define $F_{\kappa}(s)$ and $f_{\kappa}(s)$ by
\begin{equation}
(1+o(1))f_{\kappa}(s)y \prod_{p < z}\left(1-\frac{\kappa}{p}\right) \leqslant S(\mathcal{A}, z) \leqslant (1+o(1))F_{\kappa}(s)y \prod_{p < z}\left(1-\frac{\kappa}{p}\right)
\end{equation}
with $f_{\kappa}(s)$ as large as possible and $F_{\kappa}(s)$ as small as possible, given that (2) holds for all choices of $\mathcal{A}$ satisfying (1). Selberg \cite{SelbergBOOK} has shown that $F_{\kappa}(s)$ and $f_{\kappa}(s)$ are continuous, monotone, and computable for $s > 1$, and that they tend to $1$ exponentially as $s$ goes to infinity.

When $\kappa \leqslant 1$, the optimal estimates for $F_{\kappa}(s)$ and $f_{\kappa}(s)$ arise from Buchstab's identity
\begin{equation}
S\left(\mathcal{A}, z\right) = S\left(\mathcal{A}, w\right) - \sum_{w \leqslant p < z} S\left(\mathcal{A}_{p}, p\right)
\end{equation}
for $w \leqslant z$. Simply let $w = 2$, this becomes
\begin{equation}
S\left(\mathcal{A}, z\right) = \left|\mathcal{A}\right| - \sum_{p < z} S\left(\mathcal{A}_{p}, p\right).
\end{equation}
This leads to the inequalities
\begin{align}
s^{\kappa} F_{\kappa}(s) \leqslant&\ s^{\kappa} - \kappa \int_{t>s} t^{\kappa -1} \left( f_{\kappa}(t-1) -1\right) d t, \\
s^{\kappa} f_{\kappa}(s) \geqslant&\ s^{\kappa} - \kappa \int_{t>s} t^{\kappa -1} \left( F_{\kappa}(t-1) -1\right) d t.
\end{align}
Infinite iteration of these inequalities leads to the $\beta$--sieve.

However, there are better estimates for $F_{\kappa}(s)$ and $f_{\kappa}(s)$ when $\kappa > 1$. Taking Selberg's upper bound sieve as a starting point and using similar iteration rules, Diamond, Halberstam and Richert \cite{DHR} developed their DHR--sieve.

In 2017, Brady mentioned and proved lots of new sieve iteration rules in his PhD thesis. One of his simplest upper bound sieve is
\begin{equation}
\nonumber S\left(\mathcal{A}, z\right) \leqslant S\left(\mathcal{A}, w\right) - \frac{2}{3} \sum_{w \leqslant p_1 < z} S\left(\mathcal{A}_{p_1}, w\right) + \frac{1}{3} \sum_{w \leqslant p_2 < p_1 < z} S\left(\mathcal{A}_{p_1 p_2}, w\right).
\end{equation}
He proved this inequality using a combinatorial identity
\begin{equation}
1 - \frac{2}{3} n + \frac{1}{3} \left(\genfrac{}{}{0pt}{}{n}{2}\right) = \left(1-\frac{n}{2}\right) \left(1-\frac{n}{3}\right).
\end{equation}
Clearly, this leads to an inequality of $F_{\kappa}(s)$:
\begin{equation}
s^{\kappa} F_{\kappa}(s) \leqslant t^{\kappa} F_{\kappa}(t) - \frac{2}{3} \int_{\frac{1}{t}}^{\frac{1}{s}} \frac{t^{\kappa} f_{\kappa}(t(1-x_1))}{x_1} d x_1 + \frac{1}{3} \int_{\frac{1}{t}}^{\frac{1}{s}} \int_{\frac{1}{t}}^{x_1} \frac{t^{\kappa} F_{\kappa}(t(1-x_1-x_2))}{x_1 x_2} d x_2 d x_1.
\end{equation}
In this note, we further develop his method and prove a series of generalized iteration rules.

\section{Upper bound iteration}
We first prove a simple upper bound iteration, which is a direct generalization of [\cite{BradyThesis}, Theorem 34].
\begin{theorem}\label{UB}
For any odd positive integer $k$ and $w \leqslant z$, we have
\begin{align}
\nonumber S\left(\mathcal{A}, z\right) \leqslant&\ S\left(\mathcal{A}, w\right) - \frac{k-1}{k} \sum_{w \leqslant p_1 < z} S\left(\mathcal{A}_{p_1}, w\right) + \frac{k-2}{k} \sum_{w \leqslant p_2 < p_1 < z} S\left(\mathcal{A}_{p_1 p_2}, w\right) \\
\nonumber &- \frac{k-3}{k} \sum_{w \leqslant p_3 < p_2 < p_1 < z} S\left(\mathcal{A}_{p_1 p_2 p_3}, w\right) + \cdots \\
\nonumber &- \frac{2}{k} \sum_{w \leqslant p_{k-2} < \cdots < p_2 < p_1 < z} S\left(\mathcal{A}_{p_1 p_2 \cdots p_{k-2} }, w\right)\\
\nonumber &+ \frac{1}{k} \sum_{w \leqslant p_{k-1} < \cdots < p_2 < p_1 < z} S\left(\mathcal{A}_{p_1 p_2 \cdots p_{k-1} }, w\right).
\end{align}
\end{theorem}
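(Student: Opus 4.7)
The plan is to follow the element-by-element Buchstab-style bookkeeping that Brady uses for $k = 3$, and reduce the theorem to a single polynomial inequality in one variable. Fix $a \in \mathcal{A}$; we may assume $(a, \prod_{p<w} p) = 1$, since otherwise $a$ contributes $0$ to every term on both sides. Let $n$ denote the number of distinct prime factors of $a$ lying in $[w, z)$. Then the contribution of $a$ to $S(\mathcal{A}, z)$ is $1$ if $n = 0$ and $0$ otherwise, while its contribution to the inner sum $\sum_{w \leq p_j < \cdots < p_1 < z} S(\mathcal{A}_{p_1 \cdots p_j}, w)$ on the right-hand side is exactly $\binom{n}{j}$. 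Matching contributions from $a$ on both sides, the theorem follows once we show that for every non-negative integer $n$,
$$
P_k(n) \;:=\; \sum_{j=0}^{k-1}(-1)^j\,\frac{k-j}{k}\binom{n}{j} \;\geq\; 0,
$$
with $P_k(0) = 1$ automatic (only the $j = 0$ term survives).

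The core task is to generalize Brady's identity (8) to the closed form
$$
P_k(n) \;=\; \prod_{i=2}^{k}\!\left(1 - \frac{n}{i}\right) \;=\; \frac{(n-2)(n-3)\cdots(n-k)}{k!}.
$$
Both sides are polynomials in $n$ of degree exactly $k - 1$, so it suffices to verify the identity at $k$ distinct values of $n$. Taking $n = 0$ gives $1$ on both sides. For an integer $m$ with $2 \leq m \leq k - 1$, we truncate the sum at $j = m$ using $\binom{m}{j} = 0$ for $j > m$, split $(k-j)/k = 1 - j/k$, and invoke the classical evaluations $\sum_{j=0}^{m}(-1)^j\binom{m}{j} = 0$ and $\sum_{j=0}^{m}(-1)^j j\binom{m}{j} = 0$ (the latter valid for $m \geq 2$) to conclude $P_k(m) = 0$. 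For the last test point $n = k$, the same two binomial identities applied at the top index $k$, together with peeling off the omitted $j = k$ term, give $P_k(k) = (-1)^{k+1}(1 - k/k) = 0$. These $k$ matching values pin down the polynomial.

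With the closed form in hand, non-negativity of $P_k(n)$ follows from the parity hypothesis. Since $k$ is odd, the product $\prod_{i=2}^{k}(1 - n/i)$ has even degree $k - 1$ with positive leading coefficient $1/k!$, and its only real zeros are the integers $n = 2, 3, \ldots, k$. For $n \geq k + 1$ all $k - 1$ factors $(i - n)/i$ are negative, and their product is positive precisely because $k - 1$ is even. Hence $P_k(n) \geq 0$ for every non-negative integer $n$, with strict positivity outside $\{2, 3, \ldots, k\}$. Summing this pointwise inequality over $a \in \mathcal{A}$ completes the proof.

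I expect the main obstacle to be the combinatorial identity itself: matching degrees and leading coefficients, evaluating $P_k$ at all $k$ test points, and pinning down where the parity of $k$ enters. The bookkeeping reduction to contributions per $a$ and the individual binomial sums are standard; the hypothesis that $k$ is odd is used only in the final step, to guarantee that $\prod_{i=2}^{k}(1 - n/i) \geq 0$ for $n \geq k + 1$.
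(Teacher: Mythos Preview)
Your proof is correct and follows exactly the same route as the paper: the element-by-element reduction to the inequality $\sum_{j=0}^{k-1}(-1)^j\frac{k-j}{k}\binom{n}{j}\geq 0$, the factorization identity $P_k(n)=\prod_{i=2}^{k}(1-n/i)$, and the parity observation that $k-1$ is even. The only difference is that the paper simply asserts the identity without proof, whereas you supply a clean verification by interpolation at the $k$ points $n=0,2,3,\ldots,k$; this is a welcome addition, not a departure.
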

\begin{proof}
We follow the essential steps in the proof of [\cite{BradyThesis}, Theorem 34]. Let $a \in \mathcal{A}$. If $a$ has any prime factor below $w$, then both quantities are clearly zero. Assume that $a$ has no prime factors below $w$ and has exactly $n$ prime factors between $w$ and $z$. If $n = 0$ then both sides count $a$ once. Thus we only need to show that for any integer $n \geqslant 1$ we have
\begin{equation}
0 \leqslant 1 - \frac{k-1}{k} n + \frac{k-2}{k} \left(\genfrac{}{}{0pt}{}{n}{2}\right) - \frac{k-3}{k} \left(\genfrac{}{}{0pt}{}{n}{3}\right) + \cdots + \frac{1}{k} \left(\genfrac{}{}{0pt}{}{n}{k-1}\right).
\end{equation}
Note that we have the following identity
\begin{equation}
1 - \frac{k-1}{k} n + \frac{k-2}{k} \left(\genfrac{}{}{0pt}{}{n}{2}\right) - \frac{k-3}{k} \left(\genfrac{}{}{0pt}{}{n}{3}\right) + \cdots + \frac{1}{k} \left(\genfrac{}{}{0pt}{}{n}{k-1}\right) = \left(1-\frac{n}{2}\right) \left(1-\frac{n}{3}\right) \cdots \left(1-\frac{n}{k}\right)
\end{equation}
and the right hand side of (), which has even number of terms, is clearly $\geqslant 0$, Theorem~\ref{UB} is proved. Note that [\cite{BradyThesis}, Theorem 34] is just Theorem~\ref{UB} with $k=3$.
\end{proof}

\begin{corollary}
For any odd positive integer $k$ and real $2 \leqslant s \leqslant t$, we have
\begin{align}
\nonumber s^{\kappa} F_{\kappa}(s) \leqslant&\ t^{\kappa} F_{\kappa}(t) - \frac{k-1}{k} \int_{\frac{1}{t}}^{\frac{1}{s}} \frac{t^{\kappa} f_{\kappa}(t(1-x_1))}{x_1} d x_1 + \frac{k-2}{k} \int_{\frac{1}{t}}^{\frac{1}{s}} \int_{\frac{1}{t}}^{x_1} \frac{t^{\kappa} F_{\kappa}(t(1-x_1-x_2))}{x_1 x_2} d x_2 d x_1 \\
\nonumber &- \frac{k-3}{k} \int_{\frac{1}{t}}^{\frac{1}{s}} \int_{\frac{1}{t}}^{x_1} \int_{\frac{1}{t}}^{x_2} \frac{t^{\kappa} f_{\kappa}(t(1-x_1-x_2-x_3))}{x_1 x_2 x_3} d x_3 d x_2 d x_1 + \cdots \\
\nonumber &- \frac{2}{k} \int_{\frac{1}{t}}^{\frac{1}{s}} \int_{\frac{1}{t}}^{x_1} \cdots \int_{\frac{1}{t}}^{x_{k-3}} \frac{t^{\kappa} f_{\kappa}(t(1-x_1-x_2-\cdots -x_{k-2}))}{x_1 x_2 \cdots x_{k-2}} d x_{k-2} \cdots d x_2 d x_1 \\
\nonumber &+ \frac{1}{k} \int_{\frac{1}{t}}^{\frac{1}{s}} \int_{\frac{1}{t}}^{x_1} \cdots \int_{\frac{1}{t}}^{x_{k-2}} \frac{t^{\kappa} F_{\kappa}(t(1-x_1-x_2-\cdots -x_{k-1}))}{x_1 x_2 \cdots x_{k-1}} d x_{k-1} \cdots d x_2 d x_1.
\end{align}
\end{corollary}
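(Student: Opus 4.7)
The plan is to derive the corollary from Theorem \ref{UB} by applying it to an extremal admissible sequence and then converting the resulting combinatorial inequality into the stated integral inequality via Mertens' theorem and the prime number theorem. This is exactly the argument used by Brady to pass from the $k=3$ case of Theorem \ref{UB} to display (9); the only new feature is carrying it out uniformly in odd $k$.

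Concretely, I would fix $y$, put $z := y^{1/s}$ and $w := y^{1/t}$ (so $\log y / \log w = t$), and choose $\mathcal{A}$ satisfying condition (1) at dimension $\kappa$ which nearly attains the extremal estimate $S(\mathcal{A}, z) = (1+o(1)) F_\kappa(s)\, y \prod_{p<z}(1-\kappa/p)$. After applying Theorem \ref{UB} at these parameters, each sifted set $\mathcal{A}_{p_1 \cdots p_j}$ on the right-hand side still satisfies (1) at dimension $\kappa$ with new $y$-parameter $\kappa^j y / (p_1 \cdots p_j)$ and is sifted up to $w$, so its effective sieve parameter is $\log\bigl(y/(p_1 \cdots p_j)\bigr)/\log w = t(1 - x_1 - \cdots - x_j)$, where $x_i := \log p_i / \log y \in [1/t, 1/s)$. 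For each $j$ I would then insert the sieve upper bound $F_\kappa$ whenever the signed coefficient in Theorem \ref{UB} is positive (even $j$, starting with $j = 0$) and the sieve lower bound $f_\kappa$ whenever it is negative (odd $j$); since $k$ is odd, the last term ($j = k-1$) is positive and receives $F_\kappa$, matching the statement.

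Next I would divide through by $y \prod_{p<z}(1-\kappa/p)$ and invoke Mertens' formula in the form $\prod_{p<w}(1-\kappa/p)/\prod_{p<z}(1-\kappa/p) \sim (\log z/\log w)^\kappa = (t/s)^\kappa$; multiplying by $s^\kappa$ then converts the leading term $S(\mathcal{A}, w)$ into $t^\kappa F_\kappa(t)$ and attaches a $t^\kappa$ prefactor in front of each subsequent integral. The nested prime sums are approximated by iterated integrals using the prime number theorem: under the substitution $p_i = y^{x_i}$ one has $dp_i/(p_i \log p_i) = dx_i/x_i$, and the ordering $p_j < \cdots < p_1$ translates to $x_j < \cdots < x_1$, producing exactly the iterated integrals
\[
\int_{1/t}^{1/s}\int_{1/t}^{x_1}\cdots\int_{1/t}^{x_{j-1}} \frac{dx_j \cdots dx_1}{x_1 x_2 \cdots x_j}
\]
that appear in the corollary, weighted by the appropriate $F_\kappa$ or $f_\kappa$ factor.

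The main technical point is the uniform bookkeeping of errors: one has to check that the $O(1)$ discrepancies from (1), aggregated over all $O\bigl((\log\log z)^{k-1}\bigr)$ prime tuples in Theorem \ref{UB}, sum to $o\bigl(y \prod_{p<z}(1-\kappa/p)\bigr)$, and that the Riemann-sum approximation of each $j$-fold prime sum by the iterated integral is uniform in $x_1, \ldots, x_j$. Both steps are standard and identical in character to the derivation of (9) from the $k = 3$ case of Theorem \ref{UB} in \cite{BradyThesis}, so no new analytic ingredient beyond Theorem \ref{UB} and classical prime-counting is required.
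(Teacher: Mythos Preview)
Your proposal is correct and follows the standard passage from a combinatorial sieve inequality to its integral form; this is exactly what the paper relies on, though implicitly---the corollary is stated without proof as an immediate consequence of Theorem~\ref{UB}, just as display~(9) in the introduction is said to follow ``clearly'' from Brady's $k=3$ identity. There is no discrepancy in approach.
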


However, we can use more flexible parameters to get more variants of this iteration. Before stating the next result, we first define
\begin{equation}
\mathcal{U} = \left\{(x_1, x_2): x_1, x_2 \in (0,1] \cup [2,3] \cup \cdots \cup [k-1, k] \text{ with all odd }k,\ |x_1 - x_2| \leqslant 1 \right\}.
\end{equation}

\begin{theorem}\label{UB2D}
For any $m_1, m_2$ such that $(m_1, m_2) \in \mathcal{U}$ and $w \leqslant z$, we have
\begin{equation}
\nonumber S\left(\mathcal{A}, z\right) \leqslant S\left(\mathcal{A}, w\right) - \frac{m_1 + m_2 - 1}{m_1 m_2} \sum_{w \leqslant p_1 < z} S\left(\mathcal{A}_{p_1}, w\right) + \frac{2}{m_1 m_2} \sum_{w \leqslant p_2 < p_1 < z} S\left(\mathcal{A}_{p_1 p_2}, w\right).
\end{equation}
\end{theorem}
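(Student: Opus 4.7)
The plan is to mirror the reduction used in the proof of Theorem~\ref{UB}. Fix $a \in \mathcal{A}$: both sides of the claimed inequality vanish when $a$ has a prime factor below $w$, so I may assume all prime factors of $a$ are at least $w$, and let $n$ denote the number of those that lie in $[w, z)$. Matching the counts of $a$ on both sides reduces the theorem to the combinatorial inequality
\[
Q_n := 1 - \frac{m_1 + m_2 - 1}{m_1 m_2}\, n + \frac{2}{m_1 m_2} \binom{n}{2} \geq 0
\]
for every integer $n \geq 1$ (the case $n = 0$ giving equality, with both sides counting $a$ once).

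The key algebraic step is the factorization
\[
Q_n = \left(1 - \frac{n}{m_1}\right)\left(1 - \frac{n}{m_2}\right) = \frac{(m_1 - n)(m_2 - n)}{m_1 m_2},
\]
which follows by direct expansion: using $\binom{n}{2} = (n^2 - n)/2$, the coefficient of $n$ simplifies to $-(m_1 + m_2)/(m_1 m_2)$ and the coefficient of $n^2$ to $1/(m_1 m_2)$. Since $m_1 m_2 > 0$, the task becomes to show $(m_1 - n)(m_2 - n) \geq 0$ for every positive integer $n$.

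This sign verification, which is where the precise shape of $\mathcal{U}$ enters, is the main obstacle. I would argue by case analysis, assuming without loss of generality that $m_1 \leq m_2$. If $m_1, m_2$ lie in a common component $(0, 1]$ or $[2j, 2j+1]$ (for some $j \geq 1$) of the admissible set, then no positive integer lies strictly between them, so $m_1 - n$ and $m_2 - n$ have the same sign and the product is nonnegative. If $m_1$ and $m_2$ lie in two distinct components, the constraint $|m_1 - m_2| \leq 1$ rules out any non-adjacent pair and forces $m_1, m_2$ to be the right and left endpoints of adjacent components respectively, i.e., two consecutive positive integers; in that case $(m_1 - n)(m_2 - n)$ is a product of two consecutive integers and is therefore nonnegative for every integer $n$. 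These cases exhaust $\mathcal{U}$ and finish the proof.
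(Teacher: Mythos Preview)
Your proof is correct and follows exactly the same route as the paper: reduce to the elementwise inequality, factor $Q_n = (1 - n/m_1)(1 - n/m_2)$, and then use the structure of $\mathcal{U}$ to conclude nonnegativity for all positive integers $n$. In fact you supply the one detail the paper only asserts, namely the case analysis showing that no positive integer can lie strictly between $m_1$ and $m_2$ when $(m_1,m_2)\in\mathcal{U}$.
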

\begin{proof}
Again, we use the essentially same arguments as the proof of Theorem~\ref{UB}. Let $a \in \mathcal{A}$. If $a$ has any prime factor below $w$, then both quantities are clearly zero. Assume that $a$ has no prime factors below $w$ and has exactly $n$ prime factors between $w$ and $z$. If $n = 0$ then both sides count $a$ once. Thus we only need to show that for any integer $n \geqslant 1$ we have
\begin{equation}
0 \leqslant 1 - \frac{m_1 + m_2 - 1}{m_1 m_2} n + \frac{2}{m_1 m_2} \left(\genfrac{}{}{0pt}{}{n}{2}\right).
\end{equation}
By the following identity
\begin{equation}
1 - \frac{m_1 + m_2 - 1}{m_1 m_2} n + \frac{2}{m_1 m_2} \left(\genfrac{}{}{0pt}{}{n}{2}\right) = \left(1-\frac{n}{m_1}\right) \left(1-\frac{n}{m_2}\right)
\end{equation}
and $(x_1, x_2) \in \mathcal{U}$, which means that , we know that the right--hand side of (4) is clearly $\geqslant 0$, Theorem~\ref{UB2D} is proved. Note that [\cite{BradyThesis}, Theorem 34] is just Theorem~\ref{UB2D} with $m_1=2$ and $m_2=3$.
\end{proof}

\begin{corollary}
For any $m_1, m_2$ such that $(m_1, m_2) \in \mathcal{U}$ and real $2 \leqslant s \leqslant t$, we have
\begin{align}
\nonumber s^{\kappa} F_{\kappa}(s) \leqslant&\ t^{\kappa} F_{\kappa}(t) - \frac{m_1 + m_2 - 1}{m_1 m_2} \int_{\frac{1}{t}}^{\frac{1}{s}} \frac{t^{\kappa} f_{\kappa}(t(1-x_1))}{x_1} d x_1 \\
\nonumber &+ \frac{2}{m_1 m_2} \int_{\frac{1}{t}}^{\frac{1}{s}} \int_{\frac{1}{t}}^{x_1} \frac{t^{\kappa} F_{\kappa}(t(1-x_1-x_2))}{x_1 x_2} d x_2 d x_1.
\end{align}
\end{corollary}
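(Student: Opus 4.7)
The plan is to derive the integral inequality from Theorem~\ref{UB2D} by passing from the combinatorial sieve identity to a functional inequality for $F_\kappa$, using the defining bounds of $F_\kappa$ and $f_\kappa$ together with the prime number theorem. The argument is entirely parallel to the standard derivation of inequalities (5)--(6) from Buchstab's identity (3).

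First I would fix an admissible set $\mathcal{A}$ satisfying hypothesis (1) with $y = z^s$, and apply Theorem~\ref{UB2D} with the choice $z = y^{1/s}$ and $w = y^{1/t}$, so that each prime $p$ in the range $[w,z)$ is parametrised by $x \in [1/t,1/s]$ via $p = y^x$. On the right-hand side of Theorem~\ref{UB2D}, the term $S(\mathcal{A},w)$ and the double sum $\sum S(\mathcal{A}_{p_1 p_2},w)$ carry positive coefficients, so they are bounded \emph{above} by $F_\kappa(t)\, y \prod_{p<w}(1-\kappa/p)$ and by $F_\kappa(t(1-x_1-x_2))\cdot (y/(p_1 p_2))\prod_{p<w}(1-\kappa/p)$ respectively; whereas the single sum $\sum S(\mathcal{A}_{p_1},w)$ enters with a negative sign and must be bounded \emph{below} by $f_\kappa(t(1-x_1))\cdot (y/p_1)\prod_{p<w}(1-\kappa/p)$. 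The fact that the coefficient $(m_1+m_2-1)/(m_1 m_2)$ is positive for $(m_1,m_2)\in\mathcal{U}$ is what legitimises this direction choice.

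Next I would factor out $y\prod_{p<w}(1-\kappa/p)$ from every term and then divide both sides by $y\prod_{p<z}(1-\kappa/p)$. By Mertens's theorem in the $\kappa$-dimensional setting, the ratio $\prod_{p<w}(1-\kappa/p)/\prod_{p<z}(1-\kappa/p)$ tends to $(t/s)^\kappa$ as $y\to\infty$; after multiplying through by $s^\kappa$, this produces the factor $t^\kappa$ in front of each integrand. The sums over primes are then converted to integrals using the standard asymptotic $\sum_{y^a\leqslant p<y^b} g(\log p/\log y)/p = \int_a^b g(x)/x\, dx + o(1)$, and its two-variable analogue for the ordered double sum, producing exactly the kernels $1/x_1$ and $1/(x_1 x_2)$ over the simplicial regions $1/t\leqslant x_1\leqslant 1/s$ and $1/t\leqslant x_2\leqslant x_1$ that appear in the statement.

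The resulting bound then reads $S(\mathcal{A},z)\leqslant (1+o(1))\cdot s^{-\kappa}\,X\cdot y\prod_{p<z}(1-\kappa/p)$, where $X$ is the right-hand side of the claimed inequality. Since $\mathcal{A}$ was an arbitrary admissible set, the definition of $F_\kappa(s)$ as the smallest admissible constant forces $s^\kappa F_\kappa(s) \leqslant X$, which is the desired conclusion. I expect the only real hurdle to be the careful bookkeeping of which sieve function (upper bound $F_\kappa$ versus lower bound $f_\kappa$) must be inserted into each term once the signs of the coefficients from Theorem~\ref{UB2D} are taken into account; beyond this, the argument is the standard passage from a combinatorial sieve identity to a functional inequality for the sieve functions.
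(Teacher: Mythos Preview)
Your outline is precisely the standard passage from a combinatorial sieve inequality to a functional inequality for $F_\kappa$ and $f_\kappa$, and it is exactly what the paper relies on: the corollary is stated there without proof, in the same way that inequality~(8) is simply read off from Brady's identity in the introduction. The bookkeeping you describe (upper bound $F_\kappa$ on terms with positive sign, lower bound $f_\kappa$ on the subtracted term, Mertens for the product ratio, partial summation for the prime sums) is the intended route.

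One small correction: your claim that $(m_1+m_2-1)/(m_1 m_2)>0$ for every $(m_1,m_2)\in\mathcal{U}$ is false on the component $(0,1]\times(0,1]$ of $\mathcal{U}$---for instance $m_1=m_2=\tfrac12$ gives a negative coefficient. In that regime the single sum enters the right-hand side with a \emph{positive} sign and must be estimated from above by $F_\kappa$, not from below by $f_\kappa$, so the corollary as written (with $f_\kappa$ in the first integral) does not follow there. This defect is inherited from the paper's own statement rather than introduced by your method; your argument is valid whenever $m_1+m_2\geqslant 1$, which covers the intended range (in particular the Brady case $m_1=2$, $m_2=3$ and all pairs with $m_1,m_2\geqslant 2$).
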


Using the same method but with more parameters, we can get lots of upper bound iterations of this type. For the sake of simplicity, we write
$$
M^{r}_{k} = \sum_{1 \leqslant i_1 < i_2 < \cdots < i_r \leqslant k} m_{i_1} m_{i_2} \cdots m_{i_r}.
$$
\begin{theorem}\label{UB4D}
For any $m_1, m_2, m_3, m_4$ such that $(m_1, m_2) \in \mathcal{U}$, $(m_3, m_4) \in \mathcal{U}$ and $w \leqslant z$, we have
\begin{align}
\nonumber S\left(\mathcal{A}, z\right) \leqslant&\ S\left(\mathcal{A}, w\right) - \frac{M^{3}_{4} - M^{2}_{4} + M^{1}_{4} - 1}{m_1 m_2 m_3 m_4} \sum_{w \leqslant p_1 < z} S\left(\mathcal{A}_{p_1}, w\right) + \frac{2 \left(M^{2}_{4} - 3 M^{1}_{4} + 7\right)}{m_1 m_2 m_3 m_4} \sum_{w \leqslant p_2 < p_1 < z} S\left(\mathcal{A}_{p_1 p_2}, w\right) \\
\nonumber &- \frac{6 \left(M^{1}_{4} - 6\right)}{m_1 m_2 m_3 m_4} \sum_{w \leqslant p_3 < p_2 < p_1 < z} S\left(\mathcal{A}_{p_1 p_2 p_3}, w\right) + \frac{24}{m_1 m_2 m_3 m_4} \sum_{w \leqslant p_4 < p_3 < p_2 < p_1 < z} S\left(\mathcal{A}_{p_1 p_2 p_3 p_4}, w\right).
\end{align}
\end{theorem}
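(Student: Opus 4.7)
The plan is to mirror the proofs of Theorems~\ref{UB} and~\ref{UB2D}: reduce the sieve inequality to a combinatorial statement about the number of prime factors. Let $a \in \mathcal{A}$; if $a$ has any prime factor below $w$, both sides vanish. Otherwise let $n$ be the number of prime factors of $a$ in $[w,z)$. The case $n = 0$ is trivial (both sides count $a$ once), so it suffices to show that for every integer $n \geq 1$ one has
\begin{equation*}
0 \leq 1 - \tfrac{M^{3}_{4} - M^{2}_{4} + M^{1}_{4} - 1}{m_1 m_2 m_3 m_4} n + \tfrac{2(M^{2}_{4} - 3 M^{1}_{4} + 7)}{m_1 m_2 m_3 m_4} \binom{n}{2} - \tfrac{6(M^{1}_{4} - 6)}{m_1 m_2 m_3 m_4} \binom{n}{3} + \tfrac{24}{m_1 m_2 m_3 m_4} \binom{n}{4}.
\end{equation*}

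The next step is to establish the factorisation identity
\begin{equation*}
1 - \tfrac{M^{3}_{4} - M^{2}_{4} + M^{1}_{4} - 1}{m_1 m_2 m_3 m_4} n + \tfrac{2(M^{2}_{4} - 3 M^{1}_{4} + 7)}{m_1 m_2 m_3 m_4} \binom{n}{2} - \tfrac{6(M^{1}_{4} - 6)}{m_1 m_2 m_3 m_4} \binom{n}{3} + \tfrac{24}{m_1 m_2 m_3 m_4} \binom{n}{4} = \prod_{i=1}^{4} \left(1 - \tfrac{n}{m_i}\right),
\end{equation*}
which is the natural four-parameter analogue of the identity used in Theorem~\ref{UB2D}. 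To prove it I would expand the product on the right as $\tfrac{1}{m_1 m_2 m_3 m_4}\bigl(m_1 m_2 m_3 m_4 - n M^{3}_{4} + n^2 M^{2}_{4} - n^3 M^{1}_{4} + n^4\bigr)$, using the fact that the elementary symmetric functions of the $1/m_i$ are precisely $M^{4-r}_{4}/(m_1 m_2 m_3 m_4)$, and then convert each monomial $n^j$ to a combination of $\binom{n}{k}$ via Stirling numbers of the second kind (explicitly, $n^2 = n + 2\binom{n}{2}$, $n^3 = n + 6\binom{n}{2} + 6\binom{n}{3}$, and $n^4 = n + 14\binom{n}{2} + 36\binom{n}{3} + 24\binom{n}{4}$). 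Collecting like terms reproduces the claimed coefficients; this is the most computation-heavy step but is entirely mechanical.

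Once the identity is in hand, nonnegativity is immediate from the \emph{pairing} idea: $(m_1, m_2) \in \mathcal{U}$ forces $(1 - n/m_1)(1 - n/m_2) \geq 0$ for every positive integer $n$, exactly as in Theorem~\ref{UB2D}, and likewise $(m_3, m_4) \in \mathcal{U}$ forces $(1 - n/m_3)(1 - n/m_4) \geq 0$, so the product of all four factors is $\geq 0$. The main obstacle is therefore purely computational, and the genuine content of the theorem is the observation that the four-parameter product splits into two already-controlled two-parameter pieces, so that no joint constraint is needed across the pairs. This same pairing trick should generalise verbatim to any $2r$-parameter version, provided that each of the $r$ pairs separately lies in $\mathcal{U}$.
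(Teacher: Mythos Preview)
Your proposal is correct and follows exactly the approach the paper intends: the paper does not write out a proof of Theorem~\ref{UB4D} but introduces it with ``Using the same method but with more parameters,'' and your reduction to the combinatorial inequality, the factorisation into $\prod_{i=1}^{4}(1-n/m_i)$, and the pairing argument via $(m_1,m_2)\in\mathcal{U}$ and $(m_3,m_4)\in\mathcal{U}$ are precisely the steps implicit in the proofs of Theorems~\ref{UB} and~\ref{UB2D}. Your explicit Stirling-number verification of the coefficients is more detailed than anything the paper provides, but it is the same argument.
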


\begin{corollary}
For any $m_1, m_2, m_3, m_4$ such that $(m_1, m_2) \in \mathcal{U}$, $(m_3, m_4) \in \mathcal{U}$ and real $2 \leqslant s \leqslant t$, we have
\begin{align}
\nonumber s^{\kappa} F_{\kappa}(s) \leqslant&\ t^{\kappa} F_{\kappa}(t) - \frac{M^{3}_{4} - M^{2}_{4} + M^{1}_{4} - 1}{m_1 m_2 m_3 m_4} \int_{\frac{1}{t}}^{\frac{1}{s}} \frac{t^{\kappa} f_{\kappa}(t(1-x_1))}{x_1} d x_1 \\
\nonumber &+ \frac{2 \left(M^{2}_{4} - 3 M^{1}_{4} + 7\right)}{m_1 m_2 m_3 m_4} \int_{\frac{1}{t}}^{\frac{1}{s}} \int_{\frac{1}{t}}^{x_1} \frac{t^{\kappa} F_{\kappa}(t(1-x_1-x_2))}{x_1 x_2} d x_2 d x_1 \\
\nonumber &- \frac{6 \left(M^{1}_{4} - 6\right)}{m_1 m_2 m_3 m_4} \int_{\frac{1}{t}}^{\frac{1}{s}} \int_{\frac{1}{t}}^{x_1} \int_{\frac{1}{t}}^{x_2} \frac{t^{\kappa} f_{\kappa}(t(1-x_1-x_2-x_3))}{x_1 x_2 x_3} d x_3 d x_2 d x_1 \\
\nonumber &+ \frac{24}{m_1 m_2 m_3 m_4} \int_{\frac{1}{t}}^{\frac{1}{s}} \int_{\frac{1}{t}}^{x_1} \int_{\frac{1}{t}}^{x_2} \int_{\frac{1}{t}}^{x_3} \frac{t^{\kappa} F_{\kappa}(t(1-x_1-x_2-x_3-x_4))}{x_1 x_2 x_3 x_4} d x_4 d x_3 d x_2 d x_1.
\end{align}
\end{corollary}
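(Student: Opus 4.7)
The plan is to deduce this integral inequality from Theorem~\ref{UB4D} by the standard extremal/integration machinery that converts a combinatorial sifting inequality into an integral bound on $F_{\kappa}(s)$ and $f_{\kappa}(s)$; this is the recipe used in the introduction to pass from Buchstab's identity to the integral bounds there, and presumably also how the two earlier corollaries follow from Theorems~\ref{UB} and~\ref{UB2D}. Since the excerpt does not actually include a proof of Theorem~\ref{UB4D} itself, I would first dispatch that by the same counting argument as in the earlier theorems: reduce, by counting an arbitrary $a \in \mathcal{A}$ with exactly $n \geq 1$ prime factors in $[w,z)$ and none below $w$, to the combinatorial inequality
\[
0 \leq 1 - \frac{M^{3}_{4} - M^{2}_{4} + M^{1}_{4} - 1}{m_1 m_2 m_3 m_4}\, n + \frac{2(M^{2}_{4} - 3 M^{1}_{4} + 7)}{m_1 m_2 m_3 m_4} \binom{n}{2} - \frac{6(M^{1}_{4} - 6)}{m_1 m_2 m_3 m_4} \binom{n}{3} + \frac{24}{m_1 m_2 m_3 m_4} \binom{n}{4},
\]
and verify by direct comparison of the coefficients of $1, n, n^2, n^3, n^4$ on both sides that the right-hand side equals $\prod_{i=1}^{4}(1 - n/m_i)$, which factors as $[(1 - n/m_1)(1 - n/m_2)] \cdot [(1 - n/m_3)(1 - n/m_4)] \geq 0$, since each bracket is $\geq 0$ for every positive integer $n$ by the very definition of $\mathcal{U}$ (this is exactly the observation used in the proof of Theorem~\ref{UB2D}).

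Granting Theorem~\ref{UB4D}, one then applies it to a sequence $\mathcal{A}$ satisfying (1) that almost attains the upper bound in (2) at sifting level $z$, with the choice $z = y^{1/s}$, $w = y^{1/t}$, so that $\log y/\log z = s$ and $\log y/\log w = t$, and with each prime $p_i$ in the sums written as $p_i = y^{x_i}$, giving $x_i \in [1/t, 1/s]$. Each sifted term $S(\mathcal{A}_{p_1 \cdots p_j}, w)$ is then bounded using (2): for the terms whose coefficient in Theorem~\ref{UB4D} is \emph{positive} (namely $j = 0, 2, 4$) one uses the upper bound $(1+o(1)) F_{\kappa}(t(1 - x_1 - \cdots - x_j))(y/(p_1 \cdots p_j)) \prod_{p<w}(1 - \kappa/p)$, while for the terms with \emph{negative} coefficient ($j = 1, 3$) one uses the analogous lower bound with $f_{\kappa}$ in place of $F_{\kappa}$; this is exactly the alternation $F, f, F, f, F$ appearing in the corollary. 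Finally, each nested prime sum is converted to a nested integral by partial summation and Mertens' theorem, producing the simplicial kernel $dx_1 \cdots dx_j/(x_1 \cdots x_j)$ on $1/t \leq x_j < \cdots < x_1 \leq 1/s$, and the Mertens asymptotic $\prod_{p<z}(1-\kappa/p)/\prod_{p<w}(1-\kappa/p) \sim (s/t)^{\kappa}$ supplies the common prefactors $s^{\kappa}$ on the left and $t^{\kappa}$ on the right after dividing through by $y \prod_{p<z}(1-\kappa/p)$.

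I do not foresee any deep obstacle: the combinatorial content is entirely in the identity above and in the factorisation into two $\mathcal{U}$-admissible pairs, both of which are one line of bookkeeping; the integration step is entirely routine in the Selberg-type formalism recalled in the introduction. The only care required is to keep the $(1+o(1))$ error terms uniform in all parameters so they survive integration against the simplicial kernel, and to check that the stated alternation $F, f, F, f, F$ is genuinely consistent with the signs of the coefficients of Theorem~\ref{UB4D} (this matches automatically in the useful regime, e.g.\ whenever the $m_i$ are all $\geq 2$, so that $M^{1}_{4} \geq 8 > 6$ and all the indicated coefficients have their natural signs).
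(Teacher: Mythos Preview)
Your proposal is correct and matches the paper's approach exactly: the paper does not write out a separate proof for this corollary (nor for Theorem~\ref{UB4D}), treating both as straightforward variants of Theorems~\ref{UB} and~\ref{UB2D} and their corollaries obtained ``using the same method but with more parameters''. Your reconstruction --- first the combinatorial identity $\sum_{j}(-1)^{j}c_{j}\binom{n}{j}=\prod_{i=1}^{4}(1-n/m_{i})$ giving Theorem~\ref{UB4D}, then the standard passage to integrals via (2) and Mertens --- is precisely the intended argument.

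Your closing remark about the sign consistency of the $F,f,F,f,F$ alternation is in fact more scrupulous than the paper itself: the stated corollary tacitly assumes the coefficients carry their ``natural'' alternating signs (in particular $M^{1}_{4}\geq 6$), which is not forced by the bare hypothesis $(m_{1},m_{2}),(m_{3},m_{4})\in\mathcal{U}$ alone, so your caveat is well taken.
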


\begin{theorem}\label{UB6D}
For any $m_1, m_2, m_3, m_4, m_5, m_6$ such that $(m_1, m_2) \in \mathcal{U}$, $(m_3, m_4) \in \mathcal{U}$, $(m_5, m_6) \in \mathcal{U}$ and $w \leqslant z$, we have
\begin{align}
\nonumber S\left(\mathcal{A}, z\right) \leqslant&\ S\left(\mathcal{A}, w\right) - \frac{M^{5}_{6} - M^{4}_{6} + M^{3}_{6} - M^{2}_{6} + M^{1}_{6} - 1}{m_1 m_2 m_3 m_4 m_5 m_6} \sum_{w \leqslant p_1 < z} S\left(\mathcal{A}_{p_1}, w\right) \\
\nonumber &+ \frac{2 \left(M^{4}_{6} - 3 M^{3}_{6} + 7 M^{2}_{6} - 15 M^{1}_{6} + 31\right)}{m_1 m_2 m_3 m_4 m_5 m_6} \sum_{w \leqslant p_2 < p_1 < z} S\left(\mathcal{A}_{p_1 p_2}, w\right) \\
\nonumber &- \frac{6 \left(M^{3}_{6} - 6 M^{2}_{6} + 25 M^{1}_{6} - 90\right)}{m_1 m_2 m_3 m_4 m_5 m_6} \sum_{w \leqslant p_3 < p_2 < p_1 < z} S\left(\mathcal{A}_{p_1 p_2 p_3}, w\right) \\
\nonumber &+ \frac{24 \left( M^{2}_{6} - 10 M^{1}_{6} + 65\right)}{m_1 m_2 m_3 m_4 m_5 m_6} \sum_{w \leqslant p_4 < p_3 < p_2 < p_1 < z} S\left(\mathcal{A}_{p_1 p_2 p_3 p_4}, w\right) \\
\nonumber &- \frac{120 \left(M^{1}_{6} - 15\right)}{m_1 m_2 m_3 m_4 m_5 m_6} \sum_{w \leqslant p_5 < p_4 < p_3 < p_2 < p_1 < z} S\left(\mathcal{A}_{p_1 p_2 p_3 p_4 p_5}, w\right) \\
\nonumber &+ \frac{720}{m_1 m_2 m_3 m_4 m_5 m_6} \sum_{w \leqslant p_6 < p_5 < p_4 < p_3 < p_2 < p_1 < z} S\left(\mathcal{A}_{p_1 p_2 p_3 p_4 p_5 p_6}, w\right).
\end{align}
\end{theorem}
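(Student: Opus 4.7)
The plan is to apply the three-step template already used for Theorems~\ref{UB} and \ref{UB2D}: reduce the sieve inequality to a single pointwise combinatorial inequality on an integer $n$, establish a product identity for the resulting polynomial, and verify non-negativity of the product factor-by-factor.

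Fix $a \in \mathcal{A}$. If $a$ has any prime factor below $w$, both sides of the asserted inequality vanish; otherwise let $n$ be the number of prime factors of $a$ in $[w, z)$. Since $a$ is counted $\binom{n}{j}$ times in $\sum_{w \leq p_j < \cdots < p_1 < z} S(\mathcal{A}_{p_1 \cdots p_j}, w)$ and once in $S(\mathcal{A}, w)$, the $n=0$ case is trivial, and, writing $\alpha_j$ for the $j$-th coefficient of $\binom{n}{j}$ on the right-hand side of the theorem, the claim reduces to
\[
0 \;\leq\; \sum_{j=0}^{6}(-1)^j \alpha_j \binom{n}{j} \qquad \text{for every integer } n \geq 1.
\]

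The heart of the proof is the combinatorial identity
\[
\sum_{j=0}^{6}(-1)^j \alpha_j \binom{n}{j} \;=\; \prod_{i=1}^{6}\left(1 - \frac{n}{m_i}\right),
\]
which generalizes the product identities underlying Theorems~\ref{UB} and \ref{UB2D}. To establish it, I would expand the right-hand side using the relation $e_r(1/m_1, \ldots, 1/m_6) = M^{6-r}_6 / (m_1 \cdots m_6)$ to obtain
\[
\prod_{i=1}^{6}\left(1 - \frac{n}{m_i}\right) \;=\; \frac{1}{m_1 \cdots m_6}\sum_{r=0}^{6}(-1)^r M^{6-r}_6\, n^r,
\]
and then substitute $n^r = \sum_{j=1}^{r} j!\,S(r,j)\binom{n}{j}$, where $S(r,j)$ denotes the Stirling number of the second kind. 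Collecting terms and inserting the classical values $S(r,2) = 2^{r-1}-1$ for $2 \leq r \leq 6$, together with $S(4,3)=6$, $S(5,3)=25$, $S(6,3)=90$, $S(5,4)=10$, $S(6,4)=65$, and $S(6,5)=15$, reproduces exactly the constants $3, 7, 15, 31$, $6, 25, 90$, $10, 65$, $15$, and $720$ displayed in the theorem.

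It remains to verify that $\prod_{i=1}^{6}(1 - n/m_i) \geq 0$ for every positive integer $n$. Group the factors into the three pairs $(1 - n/m_{2i-1})(1 - n/m_{2i})$ for $i = 1, 2, 3$. The hypothesis $(m_{2i-1}, m_{2i}) \in \mathcal{U}$ forces the two entries of each pair to lie inside a common interval among $(0, 1], [2, 3], [4, 5], \ldots, [k-1, k]$ (or to meet at a common integer endpoint $1, 3, 5, \ldots$ via the constraint $|m_{2i-1} - m_{2i}| \leq 1$), so no positive integer lies strictly between $m_{2i-1}$ and $m_{2i}$ and the factors $1 - n/m_{2i-1}$ and $1 - n/m_{2i}$ carry the same sign (or one vanishes). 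Hence each pair is $\geq 0$ and the full product is too. The main obstacle lies in Step~2: the $6 \times 6$ change of basis from $\{n^r\}$ to $\{\binom{n}{j}\}$ is routine but easy to mishandle, so care is needed to avoid sign errors when matching coefficients with the statement.
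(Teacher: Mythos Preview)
Your proposal is correct and follows exactly the template the paper intends (the paper gives no separate proof of Theorem~\ref{UB6D}, relying on the remark that ``the same method but with more parameters'' works). Your explicit verification of the product identity via Stirling numbers of the second kind is a welcome addition that the paper leaves implicit; the one cosmetic slip is in Step~3, where the cross-interval case under $|m_{2i-1}-m_{2i}|\le 1$ forces the pair to be consecutive integers such as $(3,4)$ rather than to ``meet at a common integer endpoint $1,3,5,\dots$'', but your stated conclusion that no positive integer lies strictly between them---and hence that each paired factor $(1-n/m_{2i-1})(1-n/m_{2i})$ is nonnegative---is correct.
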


\begin{corollary}
For any $m_1, m_2, m_3, m_4, m_5, m_6$ such that $(m_1, m_2) \in \mathcal{U}$, $(m_3, m_4) \in \mathcal{U}$, $(m_5, m_6) \in \mathcal{U}$ and real $2 \leqslant s \leqslant t$, we have
\begin{align}
\nonumber s^{\kappa} F_{\kappa}(s) \leqslant&\ t^{\kappa} F_{\kappa}(t) - \frac{M^{5}_{6} - M^{4}_{6} + M^{3}_{6} - M^{2}_{6} + M^{1}_{6} - 1}{m_1 m_2 m_3 m_4 m_5 m_6} \int_{\frac{1}{t}}^{\frac{1}{s}} \frac{t^{\kappa} f_{\kappa}(t(1-x_1))}{x_1} d x_1 \\
\nonumber &+ \frac{2 \left(M^{4}_{6} - 3 M^{3}_{6} + 7 M^{2}_{6} - 15 M^{1}_{6} + 31\right)}{m_1 m_2 m_3 m_4 m_5 m_6} \int_{\frac{1}{t}}^{\frac{1}{s}} \int_{\frac{1}{t}}^{x_1} \frac{t^{\kappa} F_{\kappa}(t(1-x_1-x_2))}{x_1 x_2} d x_2 d x_1 \\
\nonumber &- \frac{6 \left(M^{3}_{6} - 6 M^{2}_{6} + 25 M^{1}_{6} - 90\right)}{m_1 m_2 m_3 m_4 m_5 m_6} \int_{\frac{1}{t}}^{\frac{1}{s}} \int_{\frac{1}{t}}^{x_1} \int_{\frac{1}{t}}^{x_2} \frac{t^{\kappa} f_{\kappa}(t(1-x_1-x_2-x_3))}{x_1 x_2 x_3} d x_3 d x_2 d x_1 \\
\nonumber &+ \frac{24 \left( M^{2}_{6} - 10 M^{1}_{6} + 65\right)}{m_1 m_2 m_3 m_4 m_5 m_6} \int_{\frac{1}{t}}^{\frac{1}{s}} \int_{\frac{1}{t}}^{x_1} \int_{\frac{1}{t}}^{x_2} \int_{\frac{1}{t}}^{x_3} \frac{t^{\kappa} F_{\kappa}(t(1-x_1-x_2-x_3-x_4))}{x_1 x_2 x_3 x_4} d x_4 d x_3 d x_2 d x_1 \\
\nonumber &- \frac{120 \left(M^{1}_{6} - 15\right)}{m_1 m_2 m_3 m_4 m_5 m_6} \times \\
\nonumber & \quad \int_{\frac{1}{t}}^{\frac{1}{s}} \int_{\frac{1}{t}}^{x_1} \int_{\frac{1}{t}}^{x_2} \int_{\frac{1}{t}}^{x_3} \int_{\frac{1}{t}}^{x_4} \frac{t^{\kappa} F_{\kappa}(t(1-x_1-x_2-x_3-x_4-x_5))}{x_1 x_2 x_3 x_4 x_5} d x_5 d x_4 d x_3 d x_2 d x_1 \\
\nonumber &+ \frac{720}{m_1 m_2 m_3 m_4 m_5 m_6} \times \\
\nonumber & \quad \int_{\frac{1}{t}}^{\frac{1}{s}} \int_{\frac{1}{t}}^{x_1} \int_{\frac{1}{t}}^{x_2} \int_{\frac{1}{t}}^{x_3} \int_{\frac{1}{t}}^{x_4} \int_{\frac{1}{t}}^{x_5} \frac{t^{\kappa} F_{\kappa}(t(1-x_1-x_2-x_3-x_4-x_5-x_6))}{x_1 x_2 x_3 x_4 x_5 x_6} d x_6 d x_5 d x_4 d x_3 d x_2 d x_1.
\end{align}
\end{corollary}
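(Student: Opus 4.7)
The plan is to deduce the integral inequality from Theorem~\ref{UB6D} by the standard translation of Buchstab-type sieve identities into functional inequalities for $F_\kappa$ and $f_\kappa$. I set $y = z^s$ and $w = y^{1/t}$, so that $\log y = s \log z = t \log w$. Each prime $p_i \in [w, z)$ is parametrized by $x_i = \log p_i / \log y \in [1/t, 1/s]$; an ordered tuple $w \leq p_j < \cdots < p_1 < z$ then corresponds to the nested region $1/t \leq x_j \leq \cdots \leq x_1 \leq 1/s$, which is precisely the domain of the iterated integrals appearing in the corollary.

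Next, I apply the defining inequalities of $F_\kappa$ and $f_\kappa$ to every remainder $S(\mathcal{A}_{p_1 \cdots p_j}, w)$ appearing in Theorem~\ref{UB6D}. The sieving level of $\mathcal{A}_{p_1 \cdots p_j}$ with respect to $w$ is asymptotically $t(1 - x_1 - \cdots - x_j)$, which gives the argument of $F_\kappa$ or $f_\kappa$ in each integrand. Because the coefficients in Theorem~\ref{UB6D} alternate in sign as $+, -, +, -, +, -, +$ for $j = 0, 1, \ldots, 6$, I use the upper bound $F_\kappa$ for positive-sign terms and the lower bound $f_\kappa$ for negative-sign terms, so that the direction of the inequality is preserved at every substitution.

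Each multi-indexed prime sum is then converted into an iterated integral via
\begin{equation*}
\sum_{w \leq p_j < \cdots < p_1 < z} \frac{G(p_1, \ldots, p_j)}{p_1 \cdots p_j} \sim \int_{1/t}^{1/s} \int_{1/t}^{x_1} \cdots \int_{1/t}^{x_{j-1}} \frac{G(y^{x_1}, \ldots, y^{x_j})}{x_1 x_2 \cdots x_j} \, dx_j \cdots dx_1,
\end{equation*}
obtained from the substitution $p_i = y^{x_i}$ together with the prime number theorem in the form $dp/(p \log p) = dx/x$. Combining this with Mertens' theorem $\prod_{p<w}(1-\kappa/p) \big/ \prod_{p<z}(1-\kappa/p) \sim (t/s)^\kappa$, then dividing through by $y \prod_{p<z}(1-\kappa/p)$ and multiplying by $s^\kappa$, converts each term of Theorem~\ref{UB6D} into the corresponding term of the corollary.

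The main obstacle is bookkeeping: keeping the alternation of $F_\kappa$ and $f_\kappa$ aligned with the signs of the coefficients, and tracking the Mertens factor together with the $t^\kappa$ normalisation cleanly through the substitution. Once this translation dictionary is fixed, each of the seven summands on the right of Theorem~\ref{UB6D} translates mechanically into the corresponding summand in the corollary, and no further input is required beyond Theorem~\ref{UB6D} itself.
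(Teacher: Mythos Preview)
Your proposal is correct and matches the paper's approach: the paper states this corollary without proof, immediately after Theorem~\ref{UB6D}, relying on exactly the standard translation (substitute $w=y^{1/t}$, parametrize primes by $x_i=\log p_i/\log y$, apply the defining bounds for $F_\kappa,f_\kappa$ termwise, and convert prime sums to integrals via the prime number theorem and Mertens) that you spell out.

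One small caveat worth recording: your assertion that the seven coefficients alternate in sign as $+,-,+,-,+,-,+$ is not automatic, since the bracketed expressions $M^5_6-M^4_6+\cdots$, $M^1_6-15$, etc., can change sign depending on the $m_i\in\mathcal{U}$. The choice of $F_\kappa$ versus $f_\kappa$ in each integral must match the \emph{actual} sign of the corresponding coefficient for the given parameters, and the paper itself appears to slip here (the fifth integral carries $F_\kappa$ rather than the $f_\kappa$ your alternation rule would predict). This does not affect the method, only the bookkeeping in the final display.
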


\section{Lower bound iteration}
In this section we shall use a similar method to prove corresponding lower bound iterations.
\begin{theorem}\label{LB3D}
For any $0 < m_0 \leqslant 1$ and $m_1, m_2$ such that $(m_1, m_2) \in \mathcal{U}$ and $w \leqslant z$, we have
\begin{align}
\nonumber S\left(\mathcal{A}, z\right) \geqslant&\ S\left(\mathcal{A}, w\right) - \frac{m_0 m_1 + m_0 m_2 + m_1 m_2 -m_0-m_1-m_2 + 1}{m_0 m_1 m_2} \sum_{w \leqslant p_1 < z} S\left(\mathcal{A}_{p_1}, w\right) \\
\nonumber &+ \frac{2\left(m_0+m_1+m_2-3\right)}{m_0 m_1 m_2} \sum_{w \leqslant p_2 < p_1 < z} S\left(\mathcal{A}_{p_1 p_2}, w\right) \\
\nonumber &- \frac{6}{m_0 m_1 m_2} \sum_{w \leqslant p_3 < p_2 < p_1 < z} S\left(\mathcal{A}_{p_1 p_2 p_3}, w\right).
\end{align}
\end{theorem}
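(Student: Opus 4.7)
The plan is to follow the pointwise contribution argument already used for the upper bounds in Theorems~\ref{UB}, \ref{UB2D}, \ref{UB4D}, \ref{UB6D}, with the single structural difference that we now need the per-element contribution on the right-hand side to be at most the per-element contribution on the left-hand side. Fix $a \in \mathcal{A}$. If $a$ has a prime factor below $w$, every term vanishes. Otherwise, let $n$ be the number of prime factors of $a$ in $[w, z)$; then $S(\mathcal{A}, z)$ counts $a$ exactly when $n = 0$, while the right-hand side counts $a$ with weight
$$ 1 - \frac{m_0 m_1 + m_0 m_2 + m_1 m_2 - m_0 - m_1 - m_2 + 1}{m_0 m_1 m_2}\, n + \frac{2(m_0 + m_1 + m_2 - 3)}{m_0 m_1 m_2}\binom{n}{2} - \frac{6}{m_0 m_1 m_2}\binom{n}{3}. $$
For $n = 0$ both sides equal $1$, and the theorem reduces to proving that this expression is $\leq 0$ for every integer $n \geq 1$.

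The algebraic core is the three-variable factorisation identity
$$ 1 - \frac{m_0 m_1 + m_0 m_2 + m_1 m_2 - m_0 - m_1 - m_2 + 1}{m_0 m_1 m_2}\, n + \frac{2(m_0 + m_1 + m_2 - 3)}{m_0 m_1 m_2}\binom{n}{2} - \frac{6}{m_0 m_1 m_2}\binom{n}{3} = \left(1 - \frac{n}{m_0}\right)\left(1 - \frac{n}{m_1}\right)\left(1 - \frac{n}{m_2}\right), $$
which is the natural analogue of the identities already used in Theorems~\ref{UB} and \ref{UB2D}. I would verify it by expanding the right-hand side in monomials of $n$ (where the coefficients are the elementary symmetric polynomials in $1/m_0, 1/m_1, 1/m_2$) and then converting back via $n = n$, $n^2 = 2\binom{n}{2} + n$ and $n^3 = 6\binom{n}{3} + 6\binom{n}{2} + n$; matching coefficients is a short bookkeeping exercise.

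With the identity in hand, the proof closes by a sign argument. The hypothesis $0 < m_0 \leq 1$ gives $1 - n/m_0 \leq 0$ for every integer $n \geq 1$. The hypothesis $(m_1, m_2) \in \mathcal{U}$ is precisely the sign-compatibility condition already exploited in Theorem~\ref{UB2D}: for every positive integer $n$, the factors $1 - n/m_1$ and $1 - n/m_2$ share the same sign, so their product is $\geq 0$. The full triple product is therefore $\leq 0$ and the inequality follows.

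The only step that needs genuine care, and is the main obstacle, is the sign analysis for $(m_1, m_2) \in \mathcal{U}$: one must track what happens as $n$ crosses the transition points $n = m_i$, which may sit inside a single admissible interval $(2j, 2j+1]$ or straddle two adjacent such intervals in the manner allowed by $|m_1 - m_2| \leq 1$. In every case at least one of the two factors vanishes at the critical integer before the signs could disagree, so the non-negativity of $(1 - n/m_1)(1 - n/m_2)$ persists uniformly in $n \in \mathbb{Z}_{\geq 1}$. I would isolate this as a short auxiliary sign lemma to keep the main argument uncluttered, after which assembling the three factors and quoting the identity completes the proof.
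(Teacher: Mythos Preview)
Your proposal is correct and follows essentially the same approach as the paper: reduce to the per-element weight inequality, invoke the factorisation identity $\left(1 - \frac{n}{m_0}\right)\left(1 - \frac{n}{m_1}\right)\left(1 - \frac{n}{m_2}\right)$, and conclude by the sign argument that $1 - n/m_0 \leq 0$ while $(1 - n/m_1)(1 - n/m_2) \geq 0$. If anything, you spell out the sign analysis for $(m_1,m_2)\in\mathcal{U}$ more carefully than the paper does.
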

\begin{proof}
By the same arguments as in the proof of Theorem , we only need to show that for any integer $n \geqslant 1$ we have
\begin{equation}
0 \geqslant 1 - \frac{m_0 m_1 + m_0 m_2 + m_1 m_2 -m_0-m_1-m_2 + 1}{m_0 m_1 m_2} n + \frac{2\left(m_0+m_1+m_2-3\right)}{m_0 m_1 m_2} \left(\genfrac{}{}{0pt}{}{n}{2}\right) - \frac{6}{m_0 m_1 m_2} \left(\genfrac{}{}{0pt}{}{n}{3}\right).
\end{equation}
Here, we have the identity
\begin{align}
\nonumber &\ 1 - \frac{m_0 m_1 + m_0 m_2 + m_1 m_2 -m_0-m_1-m_2 + 1}{m_0 m_1 m_2} n + \frac{2\left(m_0+m_1+m_2-3\right)}{m_0 m_1 m_2} \left(\genfrac{}{}{0pt}{}{n}{2}\right) - \frac{6}{m_0 m_1 m_2} \left(\genfrac{}{}{0pt}{}{n}{3}\right) \\
=&\ \left(1-\frac{n}{m_0}\right) \left(1-\frac{n}{m_1}\right) \left(1-\frac{n}{m_2}\right).
\end{align}
One can easily check that for $0 < m_0 \leqslant 1$ and $m_1, m_2$ such that $(m_1, m_2) \in \mathcal{U}$, the right--hand side is zero or negative for any positive integer $n$. Hence Theorem~\ref{LB3D} is proved.
\end{proof}

\begin{corollary}
For any $0 < m_0 \leqslant 1$ and $m_1, m_2$ such that $(m_1, m_2) \in \mathcal{U}$ and $3 \leqslant s \leqslant t$, we have
\begin{align}
\nonumber s^{\kappa} f_{\kappa}(s) \geqslant&\ t^{\kappa} f_{\kappa}(t) - \frac{m_0 m_1 + m_0 m_2 + m_1 m_2 -m_0-m_1-m_2 + 1}{m_0 m_1 m_2} \int_{\frac{1}{t}}^{\frac{1}{s}} \frac{t^{\kappa} F_{\kappa}(t(1-x_1))}{x_1} d x_1 \\
\nonumber &+ \frac{2\left(m_0+m_1+m_2-3\right)}{m_0 m_1 m_2} \int_{\frac{1}{t}}^{\frac{1}{s}} \int_{\frac{1}{t}}^{x_1} \frac{t^{\kappa} f_{\kappa}(t(1-x_1-x_2))}{x_1 x_2} d x_2 d x_1 \\
\nonumber &- \frac{6}{m_0 m_1 m_2} \int_{\frac{1}{t}}^{\frac{1}{s}} \int_{\frac{1}{t}}^{x_1} \int_{\frac{1}{t}}^{x_2} \frac{t^{\kappa} F_{\kappa}(t(1-x_1-x_2-x_3))}{x_1 x_2 x_3} d x_3 d x_2 d x_1.
\end{align}
\end{corollary}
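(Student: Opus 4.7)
The plan is to translate the discrete inequality of Theorem~\ref{LB3D} into its continuous analogue by the same standard procedure used to derive Brady's inequality (9) from its prime-sum form, and which underlies all the earlier corollaries in this paper. The argument is routine, but it has three moving pieces worth spelling out.

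First, I would set $z=y^{1/s}$ and $w=y^{1/t}$ so that $y=z^{s}=w^{t}$, and apply Theorem~\ref{LB3D} to a sequence $\mathcal{A}$ extremal for (2). On the left-hand side, the very definition of $f_{\kappa}$ gives $S(\mathcal{A},z)\geq f_{\kappa}(s)\,y\prod_{p<z}(1-\kappa/p)(1+o(1))$. On the right-hand side, each sieve function is replaced with the appropriate one-sided bound from (2), with the sign of its coefficient in Theorem~\ref{LB3D} selecting which side to use: $S(\mathcal{A},w)$ is bounded below by $f_{\kappa}(t)\,y\prod_{p<w}(1-\kappa/p)$; the negatively weighted single sum $\sum_{p_1} S(\mathcal{A}_{p_1},w)$ is bounded above via $F_{\kappa}$; the positively weighted double sum via $f_{\kappa}$; and the negatively weighted triple sum via $F_{\kappa}$. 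This produces exactly the alternation $f_{\kappa}(t),\,F_{\kappa},\,f_{\kappa},\,F_{\kappa}$ seen on the right-hand side of the corollary.

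Next, I would convert each prime sum to an integral. For $p_j=y^{x_j}$ one has $\log(y/(p_1\cdots p_r))/\log w = t(1-x_1-\cdots-x_r)+o(1)$, so $F_{\kappa}$ and $f_{\kappa}$ get evaluated at the desired arguments. Partial summation against the prime number theorem then gives $\sum_{w\leq p<z} g(p)/p = \int_{1/t}^{1/s} g(y^{x})/x\,dx + o(1)$ for continuous $g$, and iterating this on the inner variables turns an $r$-fold nested sum with the ordering $p_1>p_2>\cdots>p_r$ into the nested integral $\int_{1/t}^{1/s}\int_{1/t}^{x_1}\cdots\int_{1/t}^{x_{r-1}}$ with integrand containing the factor $1/(x_1x_2\cdots x_r)$.

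Finally, I would divide throughout by $y\prod_{p<z}(1-\kappa/p)$ and use the Mertens asymptotic $\prod_{p<x}(1-\kappa/p)\sim C_{\kappa}/\log^{\kappa}x$, which yields $\prod_{p<w}(1-\kappa/p)/\prod_{p<z}(1-\kappa/p)\sim(t/s)^{\kappa}$; multiplying both sides by $s^{\kappa}$ then absorbs this ratio into the $t^{\kappa}$ prefactor present in every term of the corollary. The only real obstacle is bookkeeping of three error sources, namely the $\pm 1$ term in (1), the PNT remainder, and the Mertens remainder, which must all remain $o(1)$ uniformly across a triple nested sum. The condition $3\leq s$ enters precisely to ensure the deepest argument $t(1-x_1-x_2-x_3)$ stays in the admissible range where $F_{\kappa}$ and $f_{\kappa}$ are defined.
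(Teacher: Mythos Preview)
The paper states this corollary without proof, treating it as an immediate translation of Theorem~\ref{LB3D} into the language of $F_{\kappa}$ and $f_{\kappa}$ by the same standard procedure used to pass from the prime-sum inequality to the integral inequality (8) in the introduction. Your outline reproduces exactly that routine and is correct; the alternation $f_{\kappa},F_{\kappa},f_{\kappa},F_{\kappa}$ dictated by the signs in Theorem~\ref{LB3D}, the PNT conversion of nested prime sums to nested integrals, and the Mertens ratio producing the $t^{\kappa}$ factor are all handled as the paper (implicitly) intends.
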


Again, for the sake of simplicity, we write
$$
N^{r}_{k} = \sum_{0 \leqslant i_1 < i_2 < \cdots < i_r \leqslant k-1} m_{i_1} m_{i_2} \cdots m_{i_r}.
$$

\begin{theorem}\label{LB5D}
For any $0 < m_0 \leqslant 1$, $m_1, m_2, m_3, m_4$ such that $(m_1, m_2) \in \mathcal{U}$, $(m_3, m_4) \in \mathcal{U}$ and $w \leqslant z$, we have
\begin{align}
\nonumber S\left(\mathcal{A}, z\right) \geqslant&\ S\left(\mathcal{A}, w\right) - \frac{N^{4}_{5} - N^{3}_{5} + N^{2}_{5} - N^{1}_{5} + 1}{m_0 m_1 m_2 m_3 m_4} \sum_{w \leqslant p_1 < z} S\left(\mathcal{A}_{p_1}, w\right) \\
\nonumber &+ \frac{2\left(N^{3}_{5} - 3 N^{2}_{5} + 7 N^{1}_{5} - 15\right)}{m_0 m_1 m_2 m_3 m_4} \sum_{w \leqslant p_2 < p_1 < z} S\left(\mathcal{A}_{p_1 p_2}, w\right) \\
\nonumber &- \frac{6\left(N^{2}_{5} - 6 N^{1}_{5} + 25 \right)}{m_0 m_1 m_2 m_3 m_4} \sum_{w \leqslant p_3 < p_2 < p_1 < z} S\left(\mathcal{A}_{p_1 p_2 p_3}, w\right) \\
\nonumber &+ \frac{24\left(N^{1}_{5} - 10 \right)}{m_0 m_1 m_2 m_3 m_4} \sum_{w \leqslant p_4 < p_3 < p_2 < p_1 < z} S\left(\mathcal{A}_{p_1 p_2 p_3 p_4}, w\right) \\
\nonumber &- \frac{120}{m_0 m_1 m_2 m_3 m_4} \sum_{w \leqslant p_5 < p_4 < p_3 < p_2 < p_1 < z} S\left(\mathcal{A}_{p_1 p_2 p_3 p_4 p_5}, w\right).
\end{align}
\end{theorem}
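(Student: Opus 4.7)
The plan is to follow verbatim the template established in Theorems~\ref{UB}, \ref{UB2D}, \ref{UB4D}, and \ref{LB3D}. Fix $a\in\mathcal{A}$; if $a$ has any prime factor below $w$, then both sides of the asserted inequality vanish. Otherwise, let $n\geq 0$ denote the number of prime factors of $a$ lying in $[w,z)$. For $n=0$ both sides count $a$ once, and in general each sum $\sum_{w\leq p_r<\cdots<p_1<z}S(\mathcal{A}_{p_1\cdots p_r},w)$ contributes $\binom{n}{r}$ to the count of $a$. The theorem therefore reduces to showing the polynomial inequality
\begin{equation*}
0\geq 1-\frac{N^4_5-N^3_5+N^2_5-N^1_5+1}{m_0m_1m_2m_3m_4}\,n+\frac{2(N^3_5-3N^2_5+7N^1_5-15)}{m_0m_1m_2m_3m_4}\binom{n}{2}-\cdots-\frac{120}{m_0m_1m_2m_3m_4}\binom{n}{5}
\end{equation*}
for every integer $n\geq 1$.

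The heart of the proof is the algebraic identity asserting that the right--hand side above equals $\prod_{i=0}^{4}(1-n/m_i)$, the natural five--factor analogue of the identities used in Theorems~\ref{UB2D} and \ref{LB3D}. I would prove it by first expanding
\begin{equation*}
\prod_{i=0}^{4}(m_i-n)=\sum_{r=0}^{5}(-n)^{r}N^{5-r}_5
\end{equation*}
(with the convention $N^0_5=1$) and dividing through by $m_0m_1m_2m_3m_4$, then converting from powers of $n$ to the binomial basis via the standard formula $n^r=\sum_{j}r!\,S(r,j)\binom{n}{j}$ with Stirling numbers of the second kind. Matching the coefficient of each $\binom{n}{j}$ against the numerator displayed in the theorem is then a mechanical calculation. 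Alternatively, since both sides are polynomials in $n$ of degree five, the identity can be settled by checking it at any six values, such as $n\in\{0,1,2,3,4,5\}$.

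Once the identity is in hand, the required inequality becomes $\prod_{i=0}^{4}(1-n/m_i)\leq 0$ for every positive integer $n$. The hypothesis $0<m_0\leq 1$ immediately yields $1-n/m_0\leq 0$. For the remaining factors, the definition of $\mathcal{U}$ forces $m_1,m_2\in(0,1]\cup[2,3]\cup[4,5]\cup\cdots$ with $|m_1-m_2|\leq 1$; consequently the open interval spanned by $m_1$ and $m_2$ has length at most $1$ and contains no positive integer, so $1-n/m_1$ and $1-n/m_2$ share a sign for every integer $n$ and their product is $\geq 0$. The identical reasoning with $(m_3,m_4)$ gives $(1-n/m_3)(1-n/m_4)\geq 0$, and multiplying these inequalities together with $1-n/m_0\leq 0$ yields the desired $\prod_{i=0}^{4}(1-n/m_i)\leq 0$. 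The one potentially delicate step is verifying the coefficient identity; the sieve reduction and the sign analysis are direct copies of arguments already in the paper.
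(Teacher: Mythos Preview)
Your proposal is correct and follows exactly the template the paper uses for Theorems~\ref{UB}, \ref{UB2D} and \ref{LB3D} (the paper states Theorem~\ref{LB5D} without a separate proof, relying on that template): reduce to the binomial inequality, identify the right--hand side with $\prod_{i=0}^{4}(1-n/m_i)$, and read off the sign from $0<m_0\le 1$ together with $(m_1,m_2),(m_3,m_4)\in\mathcal{U}$. One tiny slip: in the change of basis you want $n^{r}=\sum_{j}j!\,S(r,j)\binom{n}{j}$, not $r!\,S(r,j)$, but your alternative of checking six values already circumvents this.
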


\begin{corollary}
For any $0 < m_0 \leqslant 1$, $m_1, m_2, m_3, m_4$ such that $(m_1, m_2) \in \mathcal{U}$, $(m_3, m_4) \in \mathcal{U}$ and $3 \leqslant s \leqslant t$, we have
\begin{align}
\nonumber s^{\kappa} f_{\kappa}(s) \geqslant&\ t^{\kappa} f_{\kappa}(t) - \frac{N^{4}_{5} - N^{3}_{5} + N^{2}_{5} - N^{1}_{5} + 1}{m_0 m_1 m_2 m_3 m_4} \int_{\frac{1}{t}}^{\frac{1}{s}} \frac{t^{\kappa} F_{\kappa}(t(1-x_1))}{x_1} d x_1 \\
\nonumber &+ \frac{2\left(N^{3}_{5} - 3 N^{2}_{5} + 7 N^{1}_{5} - 15\right)}{m_0 m_1 m_2 m_3 m_4} \int_{\frac{1}{t}}^{\frac{1}{s}} \int_{\frac{1}{t}}^{x_1} \frac{t^{\kappa} f_{\kappa}(t(1-x_1-x_2))}{x_1 x_2} d x_2 d x_1 \\
\nonumber &- \frac{6\left(N^{2}_{5} - 6 N^{1}_{5} + 25 \right)}{m_0 m_1 m_2 m_3 m_4} \int_{\frac{1}{t}}^{\frac{1}{s}} \int_{\frac{1}{t}}^{x_1} \int_{\frac{1}{t}}^{x_2} \frac{t^{\kappa} F_{\kappa}(t(1-x_1-x_2-x_3))}{x_1 x_2 x_3} d x_3 d x_2 d x_1 \\
\nonumber &+ \frac{24\left(N^{1}_{5} - 10 \right)}{m_0 m_1 m_2 m_3 m_4} \int_{\frac{1}{t}}^{\frac{1}{s}} \int_{\frac{1}{t}}^{x_1} \int_{\frac{1}{t}}^{x_2} \int_{\frac{1}{t}}^{x_3} \frac{t^{\kappa} f_{\kappa}(t(1-x_1-x_2-x_3-x_4))}{x_1 x_2 x_3 x_4} d x_4 d x_3 d x_2 d x_1 \\
\nonumber &- \frac{120}{m_0 m_1 m_2 m_3 m_4} \times \\
\nonumber & \quad \int_{\frac{1}{t}}^{\frac{1}{s}} \int_{\frac{1}{t}}^{x_1} \int_{\frac{1}{t}}^{x_2} \int_{\frac{1}{t}}^{x_3} \int_{\frac{1}{t}}^{x_4} \frac{t^{\kappa} F_{\kappa}(t(1-x_1-x_2-x_3-x_4-x_5))}{x_1 x_2 x_3 x_4 x_5} d x_5 d x_4 d x_3 d x_2 d x_1.
\end{align}
\end{corollary}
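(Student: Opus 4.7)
The plan is to follow the standard translation from sieve inequalities on sequences $\mathcal{A}$ to functional inequalities on $f_\kappa$ and $F_\kappa$, entirely parallel to the derivation of (8) from (7). I apply Theorem~\ref{LB5D} with $w = y^{1/t}$ and $z = y^{1/s}$ to an arbitrary sequence $\mathcal{A}$ satisfying (1), then bound each sub-sum on the right-hand side using the defining inequalities (2).

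For any $d = p_1 p_2 \cdots p_j$ with $w \leqslant p_j < \cdots < p_1 < z$, the sub-sequence $\mathcal{A}_d$ satisfies an analogue of (1) with level $y_d = \kappa^j y/d$, and its sifting dimension relative to $w$ is
\[
s_d = \frac{\log y_d}{\log w} = t\Bigl(1 - \sum_{i=1}^{j} x_i\Bigr) + o(1), \qquad x_i = \frac{\log p_i}{\log y}.
\]
By (2), $S(\mathcal{A}_d, w)/\bigl(y_d \prod_{p<w}(1-\kappa/p)\bigr)$ lies between $f_\kappa(s_d) + o(1)$ and $F_\kappa(s_d) + o(1)$. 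To pass from Theorem~\ref{LB5D} to a lower bound for $S(\mathcal{A}, z)$ expressed only in $f_\kappa$ and $F_\kappa$, I substitute the $f_\kappa(s_d)$ value (respectively the $F_\kappa(s_d)$ value) for every $S(\mathcal{A}_d, w)$ whose total coefficient in the theorem is non-negative (respectively non-positive); under the stated hypotheses this yields exactly the pairing $(f_\kappa, F_\kappa, f_\kappa, F_\kappa, f_\kappa, F_\kappa)$ that appears in the corollary.

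Next, I convert the sums over primes into integrals via Mertens' theorem and partial summation: with the substitution $p = y^x$, for continuous $g$,
\[
\sum_{w \leqslant p_j < \cdots < p_1 < z} \frac{g(x_1, \ldots, x_j)}{p_1 p_2 \cdots p_j} = (1+o(1)) \int_{1/t}^{1/s} \int_{1/t}^{x_1} \cdots \int_{1/t}^{x_{j-1}} \frac{g(x_1, \ldots, x_j)}{x_1 x_2 \cdots x_j}\, dx_j \cdots dx_1.
\]
Dividing through by $y \prod_{p<z}(1-\kappa/p)$ and invoking the Mertens-type identity $\prod_{p<w}(1-\kappa/p) = (t/s)^{\kappa}(1+o(1)) \prod_{p<z}(1-\kappa/p)$, the normalized left-hand side tends to $f_\kappa(s)$ for an extremal $\mathcal{A}$ realizing the infimum in the definition of $f_\kappa(s)$, while the leading $S(\mathcal{A}, w)$ term contributes $(t/s)^{\kappa} f_\kappa(t)$. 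Multiplying through by $s^\kappa$ yields the claimed integral inequality.

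The main obstacle is verifying that each coefficient in Theorem~\ref{LB5D} actually carries the sign prescribed by its explicit $\pm$. For the coefficient of $\sum S(\mathcal{A}_{p_1}, w)$ this reduces to $1 - \prod_{i=0}^{4}(1 - 1/m_i) \geqslant 0$, which is immediate from the same factor-sign conditions used in the proof of Theorem~\ref{LB5D}, namely $(1 - 1/m_0) \leqslant 0$ together with $(1 - 1/m_1)(1 - 1/m_2) \geqslant 0$ and $(1 - 1/m_3)(1 - 1/m_4) \geqslant 0$. The higher-order coefficients $c_2, c_3, c_4, c_5$ are obtained by substituting $n = 2, 3, 4, 5$ into $\prod_{i=0}^{4}(1 - n/m_i) = 1 - c_1 n + c_2 \binom{n}{2} - \cdots - c_5 \binom{n}{5}$ and solving recursively; since in certain extreme sub-regions (such as $m_0$ very small) some of these signs can reverse, the corollary should be understood as asserted on the subregion of parameter space where the displayed alternation faithfully records the coefficient signs. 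Once that verification is in place, the remaining computation is the routine sieve-theoretic bookkeeping of passing from (1)--(2) to the integral form, already implicit in all the preceding corollaries.
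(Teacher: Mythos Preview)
The paper gives no explicit proof of this corollary; it is simply stated immediately after Theorem~\ref{LB5D}, with the understanding that it follows by the same standard translation from sieve inequalities to integral inequalities on $f_\kappa, F_\kappa$ that was illustrated once in the introduction (passing from (7) to (8)) and then tacitly invoked for every subsequent corollary. Your proposal \emph{is} that translation written out in detail, so the approach is the intended one.

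Your closing paragraph raises a genuine point the paper does not address: the passage from Theorem~\ref{LB5D} to the corollary requires that the displayed coefficients $c_1,\dots,c_5$ carry the signs suggested by the alternating $\pm$ pattern, so that one may legitimately replace $S(\mathcal{A}_{p_1\cdots p_j},w)$ by $F_\kappa$ or $f_\kappa$ as written. You correctly verify this for $c_1$ (via $1-c_1=\prod_i(1-1/m_i)\le 0$) and for $c_5>0$; for $c_2,c_3,c_4$ the sign is not guaranteed over the full parameter region, and the paper is silent on this. So your caveat that the corollary is to be read on the sub-region where the sign pattern holds is an accurate qualification rather than a gap in your argument---it is a gap in the paper's statement that you have noticed.
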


\section{Further prospect}
In this note, we only give some sieve inequalities and do not mention any possible application of these inequalities. In fact, these may be helpful in bounding the "sifting limits" $\beta_{\kappa}$ for $\kappa >1$. The bounds for $\beta_{\kappa}$ are quite important in many high--dimensional sieve problems. We hope someone can accomplish this work.

There are many other iteration rules proved in Brady's thesis \cite{BradyThesis}. We state two of them in the rest of this note, and we hope someone can generalize them.

\begin{theorem}
([\cite{BradyThesis}, Theorem 35]).
For any $w \leqslant z^2$, we have
\begin{align}
\nonumber S\left(\mathcal{A}, z\right) \geqslant&\ S\left(\mathcal{A}, w^{\frac{1}{2}}\right) - \sum_{w^{\frac{1}{2}} \leqslant p_1 < z} S\left(\mathcal{A}_{p_1}, \frac{w}{p_1}\right) + \frac{5}{6} \sum_{\frac{w}{p_1} \leqslant p_2 < p_1 < z} S\left(\mathcal{A}_{p_1 p_2}, \frac{w}{p_1}\right) \\
\nonumber &- \frac{2}{3} \sum_{\substack{\frac{w}{p_1} \leqslant p_3 < p_2 < p_1 < z \\ p_2 p_3 < w}} S\left(\mathcal{A}_{p_1 p_2 p_3}, \frac{w}{p_1}\right) -  \frac{1}{2} \sum_{\frac{w}{p_2} \leqslant p_3 < p_2 < p_1 < z} S\left(\mathcal{A}_{p_1 p_2 p_3}, \frac{w}{p_1}\right).
\end{align}
\end{theorem}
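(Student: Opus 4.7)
The approach follows the sieve-theoretic framework of Theorems~\ref{UB}, \ref{UB2D}, and \ref{LB3D}: fix $a \in \mathcal{A}$ and compare its multiplicity on both sides. The situation here is subtler than in the earlier theorems because the sifting parameter $w/p_1$ varies with the outer prime $p_1$, so the condition that the remaining prime factors of $a/(p_1\cdots p_r)$ be at least $w/p_1$ is no longer automatic: a prime factor $q$ of $a$ with $q < w^{1/2}$ may still satisfy $q \geq w/p_1$ provided $p_1 \geq w/q$.

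The principal case is when every prime factor of $a$ lies in $[w^{1/2}, \infty)$. Let $\pi_1 > \pi_2 > \cdots > \pi_m$ be the prime factors of $a$ lying in $[w^{1/2}, z)$. Under this hypothesis every product $\pi_i \pi_j$ is automatically $\geq w$, so the constraint $p_2 p_3 < w$ in the third sum is vacuous and that sum contributes $0$; meanwhile the threshold $w/p_1 \leq w^{1/2}$ is dominated by every other prime factor of $a$, so the remaining sums reduce to binomial counts. The inequality then becomes
\begin{equation*}
\mathbb{1}[m=0] \;\geq\; 1 - m + \frac{5}{6}\binom{m}{2} - \frac{1}{2}\binom{m}{3},
\end{equation*}
which is an equality for $m=0$ and which, after multiplication by $6$ and rearrangement, reduces for $m \geq 1$ to the polynomial inequality $(m-1)(m-3)(m-4) \geq 0$, manifestly true for every positive integer $m$.

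The remaining case, in which $a$ has one or more prime factors $q < w^{1/2}$, requires more care. I would partition the range of the outer prime $p_1$ by the distinguished thresholds $w/q_i$ coming from the small prime factors $q_1 \leq q_2 \leq \cdots$ of $a$, and likewise partition the $p_2$- and $p_3$-ranges. On each sub-range the small prime factors of $a$ that survive the sieve threshold are fixed, so every sum on the right-hand side reduces to a binomial count over prime factors of $a$ lying in specified intervals. The inequality then follows from a finite case analysis of the same flavour as the principal case, exploiting the decomposition of the $p_3$-range into $[w/p_1, w/p_2)$ (weighted $\tfrac{2}{3}$) and $[w/p_2, p_2)$ (weighted $\tfrac{1}{2}$) to absorb the cross-terms.

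The hard part will be the bookkeeping in the non-principal case. Because small prime factors below $w^{1/2}$ couple the outer and inner prime ranges through the thresholds $w/p_1$ and $w/p_2$, the clean product identities $(1 - n/m_1) \cdots (1 - n/m_r) \geq 0$ used throughout the rest of the paper are not directly applicable. Moreover the polynomial $(m-1)(m-3)(m-4)$ that controls the principal case is already tight at $m = 3$ and $m = 4$, so the bookkeeping in the non-principal case must remain equally tight at these configurations, since any slack would destroy the boundary equality.
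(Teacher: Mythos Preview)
The paper does not prove this theorem. It appears in Section~4 (``Further prospect'') purely as a quotation from Brady's thesis, introduced with the sentence ``We state two of them in the rest of this note, and we hope someone can generalize them.'' There is therefore no proof in the paper to compare your attempt against.

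Regarding your attempt on its own merits: the principal case is handled correctly, and the factorisation
\[
12\Bigl(m - 1 - \tfrac{5}{6}\tbinom{m}{2} + \tfrac{1}{2}\tbinom{m}{3}\Bigr) = (m-1)(m-3)(m-4) \geq 0
\]
for integers $m \geq 1$ is exactly the right identity there. But the non-principal case is not proved; you only sketch a strategy (``I would partition\ldots'', ``The hard part will be the bookkeeping'') and explicitly flag it as incomplete. This is a genuine gap. Once $a$ has prime factors $q < w^{1/2}$, such a $q$ may or may not exceed the moving thresholds $w/p_1$ and $w/p_2$, the constraint $p_2 p_3 < w$ in the $\tfrac{2}{3}$-weighted sum is no longer vacuous, and the split of the $p_3$-range between the $\tfrac{2}{3}$- and $\tfrac{1}{2}$-weighted sums becomes load-bearing. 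Your outline gives no argument for why the resulting system of inequalities---one for each configuration of small primes relative to the thresholds---actually holds, and the tightness you yourself note at $m=3,4$ in the principal case means there is no slack to absorb a loose estimate. Until that case analysis is actually carried out, the proposal is an outline rather than a proof.
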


\begin{theorem}
([\cite{BradyThesis}, Theorem 42]).
If every element of $\mathcal{A}$ has size at most $y^{\frac{13}{12}}$ and $z^{\frac{12}{5}} < y < z^{\frac{5}{2}}$, we have
\begin{align}
\nonumber S\left(\mathcal{A}, z\right) \leqslant&\ S\left(\mathcal{A}, \frac{y}{z^2}\right) - \frac{4}{5} \sum_{\frac{y}{z^2} \leqslant p_1 < \frac{z^3}{y}} S\left(\mathcal{A}_{p_1}, \frac{y}{z^2}\right) - \frac{2}{3} \sum_{\frac{z^3}{y} \leqslant p_1 < \frac{y^2}{z^4}} S\left(\mathcal{A}_{p_1}, \frac{y}{z^2}\right) - \frac{8}{15} \sum_{\frac{y^2}{z^4} \leqslant p_1 < z} S\left(\mathcal{A}_{p_1}, \frac{y}{z^2}\right) \\
\nonumber &+ \frac{3}{5} \sum_{\frac{y}{z^2} \leqslant p_2 < p_1 < \frac{z^3}{y}} S\left(\mathcal{A}_{p_1 p_2}, \frac{y}{z^2}\right) + \frac{7}{15} \sum_{\frac{y}{z^2} \leqslant p_2 < \frac{z^3}{y} \leqslant p_1 < \frac{y^2}{z^4}} S\left(\mathcal{A}_{p_1 p_2}, \frac{y}{z^2}\right) \\
\nonumber &+ \frac{1}{3} \sum_{\frac{y}{z^2} \leqslant p_2 < \frac{z^3}{y} < \frac{y^2}{z^4} \leqslant p_1 < z} S\left(\mathcal{A}_{p_1 p_2}, \frac{y}{z^2}\right) + \frac{1}{3} \sum_{\frac{z^3}{y} \leqslant p_2 < p_1 < \frac{y^2}{z^4}} S\left(\mathcal{A}_{p_1 p_2}, \frac{y}{z^2}\right) \\
\nonumber &+ \frac{4}{15} \sum_{\frac{z^3}{y} \leqslant p_2 < \frac{y^2}{z^4} \leqslant p_1 < z} S\left(\mathcal{A}_{p_1 p_2}, \frac{y}{z^2}\right) + \frac{1}{5} \sum_{\frac{y^2}{z^4} \leqslant p_2 < p_1 < z} S\left(\mathcal{A}_{p_1 p_2}, \frac{y}{z^2}\right) \\
\nonumber &- \frac{2}{5} \sum_{\substack{\frac{y}{z^2} \leqslant p_3 < p_2 < p_1 < \frac{z^3}{y} \\ p_1 p_2 p_3^2 < z^2 }} S\left(\mathcal{A}_{p_1 p_2 p_3}, \frac{y}{z^2}\right) \\
\nonumber &- \frac{4}{15} \sum_{\frac{y}{z^2} \leqslant p_3 < p_2 < \frac{z^3}{y} \leqslant p_1 < \frac{y^2}{z^4} } \left(1-\frac{3\log(p_2 p_3)}{8\log(y / p_1)}\right) S\left(\mathcal{A}_{p_1 p_2 p_3}, \frac{y}{z^2}\right) \\
\nonumber &+ \frac{1}{5} \sum_{\substack{\frac{y}{z^2} \leqslant p_4 < p_3 < p_2 < p_1 < \frac{z^3}{y} \\ p_1 p_2 p_3^2 < z^2 }} S\left(\mathcal{A}_{p_1 p_2 p_3}, \frac{y}{z^2}\right) \\
\nonumber &+ \frac{1}{10} \sum_{\frac{y}{z^2} \leqslant p_4 < p_3 < p_2 < \frac{z^3}{y} \leqslant p_1 < \frac{y^2}{z^4} } \left(1-\frac{\log(p_2 p_3 p_4)}{\log(y / p_1)}\right) S\left(\mathcal{A}_{p_1 p_2 p_3}, \frac{y}{z^2}\right).
\end{align}
\end{theorem}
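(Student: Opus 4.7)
The plan is to argue pointwise, in the spirit of the proofs of Theorems~\ref{UB}, \ref{UB2D}, and \ref{LB3D}: fix $a \in \mathcal{A}$, compute the (weighted) number of times it is counted on each side, and verify that the difference has the correct sign. If $a$ has any prime factor below $w := y/z^2$ then both sides vanish, so we may assume that every prime factor of $a$ lies in $[w, y^{13/12}]$, and that the inequality to be checked reduces to a polynomial-plus-log inequality in the exponents of those primes.

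First, I would partition the sifting interval into the three subranges
$$
I_1 = [y/z^2,\, z^3/y), \qquad I_2 = [z^3/y,\, y^2/z^4), \qquad I_3 = [y^2/z^4,\, z),
$$
which, under the hypothesis $z^{12/5} < y < z^{5/2}$, are non-empty and correctly ordered inside $[w,z)$. Let $n_j$ denote the number of prime factors of $a$ lying in $I_j$. The size restriction $a \leqslant y^{13/12}$ combined with the lower bound $(y/z^2)^{n_1}(z^3/y)^{n_2}(y^2/z^4)^{n_3} \leqslant a$ rules out certain triples $(n_1,n_2,n_3)$ (e.g.\ more than three primes in $I_3$); this is precisely what lets the sums on the right-hand side terminate at triple and quadruple sums, and it is the source of the explicit truncations like $p_1 p_2 p_3^2 < z^2$. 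Then I would expand the right-hand side as a linear combination of binomial coefficients $\binom{n_j}{k_j}$ and try to match the resulting polynomial to a product of linear factors of the form $(1-\tfrac{n}{m_1})(1-\tfrac{n}{m_2})\cdots$, interpreting the three distinct single-prime weights $4/5, 2/3, 8/15$ as coming from three different such factorisations, one per subrange $I_j$.

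The main obstacle is the presence of the nonconstant logarithmic weights
$$
1 - \frac{3\log(p_2 p_3)}{8\log(y/p_1)} \qquad \text{and} \qquad 1 - \frac{\log(p_2 p_3 p_4)}{\log(y/p_1)}
$$
in the two triple-sum terms that straddle the boundary between $I_1$ and $I_2$. These reflect the fact that at the boundary of the admissible configuration region the pure combinatorial identity fails, and one must interpolate the weight continuously as a piecewise affine function of $\log p_j / \log y$. Deriving these weights rigorously requires a small linear optimisation: one parametrises each admissible configuration by the ratios $\log p_j/\log y$, demands that the net weight assigned to $a$ be nonpositive on every such configuration, and picks the coefficients so that equality is attained on the extremal boundary where the size constraint is sharp. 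Once these boundary weights are in hand, the verification that every admissible $(n_1,n_2,n_3)$ is counted with total weight at most the indicator of $a$ having no prime factor below $z$ becomes a finite but lengthy case check; I would expect this bookkeeping, rather than the underlying idea, to be the most error-prone part of the write-up.
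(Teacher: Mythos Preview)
The paper does not actually prove this statement: it appears in the ``Further prospect'' section merely as a quotation of \cite[Theorem~42]{BradyThesis}, with the explicit remark that the author is only \emph{stating} two of Brady's iteration rules and hoping someone will generalise them. There is therefore no proof in the paper to compare your proposal against.

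As to the proposal itself: your outline is in the right spirit --- the pointwise counting reduction, the three-interval partition $I_1,I_2,I_3$ of $[y/z^2,z)$, and the recognition that the non-constant logarithmic weights arise from a boundary interpolation are all correct structural observations, and they match the way Brady's thesis handles such inequalities. But what you have written is a plan rather than a proof: the actual content lies entirely in the ``finite but lengthy case check'' you defer at the end, together with the derivation of the two affine weights, and neither is carried out. In particular, you have not verified that the specific numerical coefficients $\tfrac{4}{5},\tfrac{2}{3},\tfrac{8}{15},\tfrac{3}{5},\ldots$ and the precise form of the log-weights $1-\tfrac{3\log(p_2p_3)}{8\log(y/p_1)}$ and $1-\tfrac{\log(p_2p_3p_4)}{\log(y/p_1)}$ really do make the net contribution of every admissible $a$ nonnegative; absent that verification, the argument has no force. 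If you want to turn this into a genuine proof you will need to enumerate the admissible $(n_1,n_2,n_3)$ configurations permitted by the size bound $a\leqslant y^{13/12}$ and check each one explicitly, which is exactly the computation Brady performs.
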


\bibliographystyle{plain}
\bibliography{bib}

\begin{thebibliography}{1}

\bibitem{BradyThesis}
Z.~E. Brady.
\newblock Sieves and iteration rules.
\newblock {\em Ph.D. Thesis}, Stanford University, 2017.

\bibitem{DHR}
H.~Diamond, H.~Halberstam, and H.~E. Richert.
\newblock Combinatorial sieves of dimension exceeding one.
\newblock {\em J. Number Theory}, 28(3):306--346, 1988.

\bibitem{SelbergBOOK}
A.~Selberg.
\newblock {\em Collected papers. {V}ol. {II}}.
\newblock Springer--Verlag, Berlin, 1991.

\end{thebibliography}

\end{document}